\newtheorem{theorem}{Theorem}
\numberwithin{theorem}{section}
\newtheorem{claim}[theorem]{Claim}
\newtheorem{conjecture}[theorem]{Conjecture}
\newtheorem{question}[theorem]{Question}
\newtheorem{remark}[theorem]{Remark}
\numberwithin{equation}{section}
\title{A note on locating-dominating sets in twin-free graphs}
\author{Nicolas Bousquet\thanks{LIRIS, Université Claude Bernard Lyon 1, CNRS France. Email: \texttt{nicolas.bousquet@liris.cnrs.fr}. Research supported by ANR project GrR (ANR-18-CE40-0032).} \and Quentin Chuet\thanks{LISN, Université Paris-Saclay, France. Email: \texttt{quentin.chuet@lisn.fr}} \and Victor Falgas--Ravry\thanks{Institutionen f\"or Matematik och Matematisk Statistik, Ume{\aa} Universitet, Sweden. Email: \texttt{victor.falgas-ravry@umu.se}. Research supported by Swedish Research Council grant VR 2021-03687.} \and Amaury Jacques\thanks{\textsc{LaBRI}, Université de Bordeaux, France. Email: \texttt{amaury.jacques@labri.fr}} \and Laure Morelle\thanks{LIRMM, Université de Montpellier, France. Email: \texttt{laure.morelle@lirmm.fr}. Research supported by ANR project ELIT (ANR-20-CE48-0008-01) and French-German Collaboration ANR/DFG project UTMA (ANR-20-CE92-0027).}}
\begin{document}
	\maketitle
	
	\begin{abstract}
In this short note, we prove that every twin-free graph on $n$ vertices contains a locating-dominating set of size at most $\lceil\frac{5}{8}n\rceil$. This improves the earlier bound of $\lfloor\frac{2}{3}n\rfloor$ due to Foucaud, Henning, L\"owenstein and Sasse from 2016, and makes some progress towards the well-studied locating-dominating conjecture of Garijo, Gonz\'alez and M\'arquez.
	\end{abstract}
	
	\section{Introduction}
\subsection{Background and results}
		In this short note, we consider the problem of location-domination in graphs (see Section~\ref{section: defs} below for definitions).  Domination and location\footnote{There are several notions of location in the literature. In this paper we are exclusively concerned with neighbour-locating sets, rather than any other notion of locating sets such as e.g.\ metric-locating sets or $r$-locating sets.} are both well-studied topics in graph theory, and have been the subject of several books and surveys, see e.g.\ the recent monograph of Haynes, Hedetniemi and Henning~\cite{haynes2020topics} on domination, and the extensive bibliography of over 500 publications on identification and location problems in graphs maintained by Devin C. Jean~\cite{devinjean}.

	Locating-dominating sets, for their part, were introduced by Slater~\cite{slater1987domination,slater1988dominating} in the late 1980s, and have also received significant attention (see e.g.\ ~\cite{bousquet2023locating,colbourn1987locating,finbow1988locating,foucaud2016locating,garijo2014difference,henning2012locating,hernando2014nordhaus,slater1995locating,slater2002fault}), with motivation coming from network science and theoretical computer science. A major conjecture in the area, due to Garijo, Gonz\'alez and M\'arquez~\cite{garijo2014difference}, is that the location-domination number $LD(G)$ of an $n$-vertex twin-free graph is at most $n/2$:
	\begin{conjecture}[Location-domination conjecture]\label{conjecture: loc dom}
	If $G$ is a twin-free graph on $n$ vertices, then $LD(G)\leq \frac{n}{2}$.	
\end{conjecture}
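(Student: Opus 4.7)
The plan is to attempt Conjecture~\ref{conjecture: loc dom} by strong induction on $n$, aiming to identify in every non-trivial twin-free graph $G$ a small local reduction that decreases the number of vertices by $2$ while adding at most $1$ new vertex to the locating-dominating set. Concretely, I would search for an edge $uv$ (or a slightly larger substructure, such as a triangle or a $P_3$) such that $G':=G-\{u,v\}$, possibly after a light normalisation absorbing newly created twins, remains twin-free and admits by induction a locating-dominating set $D'$ of size at most $(n-2)/2$; then $D:=D'\cup\{u\}$ would locate-dominate $G$ with $|D|\leq n/2$. A first checkpoint is to handle vertices of degree $1$ and $2$, where such a reduction is almost immediate, and to verify the statement on structural testbeds (paths, cycles, trees, block graphs) where the bound is known.

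The main obstacle, and the reason the conjecture has resisted a proof of the tight $n/2$ bound despite the $\frac{2}{3}n$ and now $\lceil\frac{5}{8}n\rceil$ results, is \emph{twin creation under deletion}: removing $u$ can collapse two previously distinct open neighbourhoods $N(x)\setminus\{u\}$ and $N(y)\setminus\{u\}$ into a twin pair in $G'$, so the inductive hypothesis does not apply to $G'$ directly. To navigate this, I would strengthen the statement to a weighted or deficiency version, carrying along a credit of $\tfrac{1}{2}$ per pair of vertices that might be fused into twins by a future deletion, and proving $LD(G) + (\text{twin-surplus charges}) \leq n/2$ by induction. The bookkeeping would force a finite case analysis driven by the local structure around a minimum-degree vertex, which is where I expect the argument to become delicate and where any naive execution is likely to fail.

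If the reductive approach breaks, a natural fallback is a discharging or fractional-matching argument on a minimum locating-dominating set $D$: give each vertex charge $\tfrac{1}{2}$, and show that one can route $\tfrac{1}{2}$ unit from a distinct private witness in $V\setminus D$ to each $v\in D$, using that the map $w\mapsto N(w)\cap D$ is injective on $V\setminus D$ (this is where twin-freeness enters) to produce the required system of distinct representatives. The hard part will be handling ``heavy'' vertices of $D$ that sit in the $D$-neighbourhood of many outside vertices; for these, one would need to exploit the fact that their many outside neighbours cannot simultaneously avoid having other private separators in $D$. I expect that a complete resolution must ultimately identify the extremal configurations where $LD(G)=n/2$ — plausibly disjoint unions of $P_4$'s, half-graphs or certain bipartite-like structures — and verify that both the reduction and the discharging procedure meet them with equality; producing and fully analysing such a list of extremal obstructions is the step where I expect this plan, like previous attempts, to be most vulnerable.
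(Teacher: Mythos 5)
The statement you are trying to prove is Conjecture~\ref{conjecture: loc dom}, which is \emph{open}: the paper does not prove it, and only establishes the weaker bound $LD(G)\leq \lceil \frac{5n}{8}\rceil$ of Theorem~\ref{theorem: main} (via an entirely different, non-inductive argument that maximises the combined locating power $s(A)+s(\overline{A})$ of a bipartition and then averages four upper bounds coming from four explicit locating sets). So there is no proof in the paper to measure your attempt against, and your proposal must stand on its own. It does not: it is a research plan with the two hardest steps explicitly left unresolved, and both of them contain genuine gaps rather than mere missing details.

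First, the inductive reduction. You correctly identify twin creation under deletion as the obstruction, but your remedy --- a ``weighted or deficiency version'' of the statement carrying ``twin-surplus charges'' --- is never formulated, let alone proved to be inductively self-sustaining; as stated it is a name for the difficulty, not a solution to it. Moreover, even when $G-\{u,v\}$ stays twin-free, the step $D:=D'\cup\{u\}$ does not obviously work: $v$ must be dominated and located by $D$, and every vertex $x\notin D$ that was located in $G'$ must now also be distinguished \emph{from $v$} in $G$, neither of which follows from $D'$ being locating-dominating in $G'$. Second, the discharging fallback is circular. Routing $\tfrac12$ from a \emph{distinct} private witness in $V\setminus D$ to each $v\in D$ requires an injection from $D$ into $V\setminus D$, whose existence is equivalent to $|D|\leq n-|D|$, i.e.\ to $LD(G)\leq \frac{n}{2}$ --- exactly the statement being proved. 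The locating property gives injectivity of $w\mapsto N(w)\cap D$ on $V\setminus D$, which is the injection in the \emph{wrong direction}; converting it into the needed one amounts to verifying Hall's condition for the bipartite witness graph, and that verification is the entire content of the conjecture, untouched by your outline. Until either the deficiency invariant is written down and closed under the reduction, or Hall's condition is established for heavy vertices, the proposal makes no progress beyond identifying (accurately) where the known difficulties lie.
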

	\noindent Given a $n$-vertex twin-free graph $G$, Garijo, Gonz\'alez and M\'arquez~\cite{garijo2014difference} showed its location-domination number $LD(G)$ can be upper-bounded by $\lfloor \frac{2n}{3}\rfloor +1$. This upper bound was improved to $\lfloor \frac{2n}{3}\rfloor$ by Foucaud, Henning, L\"owenstein and Sasse~\cite{foucaud2016locating}, who in addition showed that Conjecture~\ref{conjecture: loc dom}, if true, admits a rich family of extremal graphs. In this note, we obtain a significant improvement on these upper bounds, bringing us a step closer to Conjecture~\ref{conjecture: loc dom}:	
	\begin{theorem}\label{theorem: main}
		If $G$ is a twin-free graph on $n$ vertices, then $LD(G) \leq \lceil \frac{5n}{8} \rceil$.
	\end{theorem}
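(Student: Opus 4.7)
My plan is to proceed by strong induction on $n$, with small cases checked directly, using a local-surgery approach. Specifically, I would aim to identify in every sufficiently large twin-free graph $G$ a constant-size ``handleable piece'' $U \subseteq V(G)$ of size $k$, together with a subset $T_U \subseteq V(G)$ of size at most $\lceil 5k/8 \rceil$ such that $T_U$ locates and dominates the vertices of $U$. The induction hypothesis applied to $G' := G - U$ would then produce an LD-set $S'$ of size at most $\lceil 5(n-k)/8 \rceil$, and I would argue that $T_U \cup S'$ is an LD-set of $G$ of size at most $\lceil 5n/8 \rceil$.

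The heart of the argument is the structural claim that every twin-free graph on sufficiently many vertices contains such a favorable piece $U$. I would look for $U$ of several possible shapes: a low-degree vertex $v$ together with its nearby neighbourhood structure (e.g.\ forming a short induced path, or a pendant attached via few edges); specific small induced subgraphs on which the $5/8$ ratio can be verified by hand; or a vertex whose deletion so simplifies the graph that extra savings accrue. The preceding bound $\lfloor 2n/3 \rfloor$ of Foucaud, Henning, L\"owenstein and Sasse corresponds to finding a ``two-out-of-three'' piece; since the gain $\tfrac{2}{3}-\tfrac{5}{8}=\tfrac{1}{24}$ is small, I expect the improvement to come from a clever refinement allowing one to extract on average an additional vertex from the LD-set every $24$ vertices, rather than from a wholesale new technique. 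Concretely, I would look for a slightly larger piece (say of size $8$) on which one can do better than the $2/3$ ratio locally, while falling back to the $2/3$-type pieces when no such larger piece is available.

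The main obstacle is the case analysis. One must rule out the existence of a twin-free graph containing none of the targeted pieces, likely by a careful examination of a vertex of smallest degree in $G$ and enumeration of its possible local configurations, using twin-freeness to eliminate most of them. A secondary but important subtlety is that removing $U$ from $G$ may create new twin pairs in $G - U$, so the induction hypothesis does not apply directly; one must therefore either choose $U$ so that $G - U$ remains twin-free, or enlarge $U$ to absorb newly-created twins, all without sacrificing the $\lceil 5k/8 \rceil$ accounting. Managing this interaction at the boundary of $U$ --- ensuring that the combined set $T_U \cup S'$ correctly locates vertices straddling $U$ and $V(G) \setminus U$ --- is where I anticipate the proof will become most delicate.
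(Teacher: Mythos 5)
Your proposal is a plan rather than a proof: the central structural claim --- that every sufficiently large twin-free graph contains a constant-size piece $U$ of size $k$ admitting a set $T_U$ of at most $\lceil 5k/8\rceil$ vertices that locates and dominates $U$ and composes correctly with an inductively obtained set for $G-U$ --- is stated as a goal but never established, and the two obstacles you yourself flag are exactly where such an argument breaks down. First, twin-freeness is not hereditary: deleting $U$ can create twins in $G-U$, and there is no obvious way to ``absorb'' the newly created twins into $U$ while keeping $|U|$ bounded and the $5/8$ accounting intact; this is a known and serious barrier to induction in this problem, not a routine technicality. Second, location is a global property: even if $T_U$ locates $U$ and $S'$ locates $G-U$, a vertex of $U\setminus T_U$ and a vertex of $(V\setminus U)\setminus S'$ may have identical neighbourhoods in $T_U\cup S'$, and nothing in your sketch controls this. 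Your premise that the $\lfloor 2n/3\rfloor$ bound of Foucaud, Henning, L\"owenstein and Sasse comes from a ``two-out-of-three'' local piece is also inaccurate: that proof, like the present one, is a global argument built around a set maximising a counting functional, not a local surgery.

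The paper's proof is non-inductive and quite different. For $A\subseteq V$ let $s(A)$ be the number of classes of the relation ``same neighbourhood in $A$'' on $\overline{A}$. One fixes a set $A$ maximising $s(A)+s(\overline{A})$, refines it to a \emph{good} set (one where every class of $\sim_{\overline{A}}$ on $A$ is a singleton, so that $\overline{A}$ is a locating set), and lets $k$ be the number of non-trivial classes of $\sim_A$ on $\overline{A}$, with $B$ their union and $C$ the rest of $\overline{A}$. The maximality of $s(A)+s(\overline{A})$ forces four explicit sets to be locating, of sizes $n-c-k$, $b+c$, $n-b$ and at most $c+3k-1$; averaging the first and fourth when $k\le\frac{n+3}{8}$, and the first three when $k\ge\frac{n+3}{8}$, gives a locating set of size at most $\frac{5n-1}{8}$, hence $LD(G)\le\lceil\frac{5n}{8}\rceil$ after adding one vertex for domination. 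If you want to pursue your local approach you would need, at minimum, a proof of the existence of the favourable piece together with a mechanism for handling twin creation and cross-boundary location; as written, the proposal does not constitute a proof.
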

\begin{remark}\label{rem: actual result}
In fact, we prove the slightly stronger statement that a twin-free graph on $n$ vertices contains a locating set of size at most $\lfloor \frac{5n-1}{8}\rfloor$, from which Theorem~\ref{theorem: main} follows immediately. 
 \end{remark}
 \noindent Our proof of Theorem~\ref{theorem: main} leads naturally to the following intriguing question, which we believe has not appeared in the literature before 
and which ought to be of independent interest:
	\begin{question}\label{question: partition locating sets}
Let $G$ be a twin-free graph. Is there a partition $X\sqcup Y=V(G)$ of its vertex set such that $X$ and $Y$ are both locating sets?
	\end{question}
 
\subsection{Definitions and notation}\label{section: defs}
 Before we dive into the proof of Theorem~\ref{theorem: main}, we recall some key definitions. Let $G=(V,E)$ be a graph. We use standard graph theoretic notation throughout the paper.
 We denote the complement $V\setminus X$ of a subset $X\subseteq V$ by $\overline{X}$.

	Given a subset $X\subseteq V$, for any subset $Y\subseteq \overline{X}$, 
 we can define an equivalence relationship $\sim_X$ on $Y$ by letting $y\sim_Xy'$ if the neighbourhoods of $y,y'$ in $X$ are identical, $N_G(y)\cap X=N_G(y')\cap X$. We refer to the resulting partition of $Y$ into equivalence classes as the \emph{$X$-partition} of $Y$. A set of \emph{representatives} for this partition is a collection of vertices $R_Y\subseteq Y$ such that $R_Y$ meets each part of the $X$-partition of $Y$ in exactly one vertex.

Given a pair of distinct vertices $v,v'\notin X$, we say that $X$  \emph{distinguishes} $v$ from $v'$ if $v\not\sim_X v'$. This occurs precisely if there exists $x\in X$ such that $x$ sends an edge to exactly one of $v$ and $v'$. In a slight abuse of notation, we say a vertex $x$ distinguishes $v$ from $v'$ if the singleton $\{x\}$ distinguishes $v$ from $v'$. The set $X$ is said to be a \emph{locating set} if the $X$-partition of $\overline{X}$ consists of singletons only --- or, equivalently, if every pair of vertices in $\overline{X}$ can be distinguished by some vertex of $X$. Such a set is sometimes referred to as a \emph{neighbour-locating set} in the literature.

	Furthermore, a subset $X\subseteq V$ is said to be a \emph{dominating set} in $G$ if every vertex $v\in V\setminus X$ has a neighbour in $X$.  A \emph{locating-dominating set} in $G$ is then a set $X$ which is both a locating set and a dominating set. We denote by $L(G)$ and $LD(G)$ the minimum sizes of a locating and of a locating-dominating set in $G$ respectively. Note that given a locating set $X$ there is at most one vertex $v_0\notin X$ whose neighbourhood in $X$ is the empty set, so that $X\cup\{v_0\}$ is a locating-dominating set. It follows that $LD(G)\leq L(G)+1$.

	Finally, two distinct vertices $v,v'$ in $G$ are said to be \emph{twins} if their open or closed neighbourhoods are identical, i.e.\ if $N_G(v)\setminus\{v'\}=N_G(v')\setminus\{v\}$. In the context of location, one is typically interested in studying \emph{twin-free} graphs, in which there are no twin vertices.

\section{Proof of Theorem~\ref{theorem: main}}

Let $G=(V,E)$ be a twin-free graph on $n$ vertices. Since $LD(G)\leq L(G)+1$, as we remarked in Section~\ref{section: defs}, it is enough to show that $G$ contains a locating set of size a little less than $\frac{5n}{8}$. Given a set $A\subseteq V$, let $s(A)$ denote the number of 
equivalence classes of $\sim_A$ on $\overline{A}$.
Informally, this quantity $s(A)$ measures how much `separation' or `locating power' the set $A$ has. Let $S$ denote the maximum of $s(A)+s(\overline{A})$ over all sets $A\subseteq V(G)$.

We say that a 
class 
is \emph{non-trivial} if it has size at least two, and we say that set $A\subseteq V$ is a \emph{good} set if $s(A)+s(\overline{A})=S$ and $s(\overline{A})=\vert A\vert$. In other words, $A$ is a good set if it maximises $s(A)+s(\overline{A})$ and in addition every 
class of $\sim_{\overline{A}}$ on $A$ 
is trivial.  Note that $A$ is a good set implies $\overline{A}$ is a locating set.
 See Figure~\ref{fig:good} for an illustration of a good set $A$.

The following simple observation will allow us to prove the existence of good sets.


\begin{claim}\label{claim: potato thinning}
Suppose $U$ is a non-trivial 
class of $\sim_A$ on $\overline{A}$.
Then for any choice of $u\in U$, setting $U^-=U\setminus\{u\}$, we have that $s(\overline{A}\setminus U^-) \geq s({\overline{A}})$ and $s({A\cup U^-})\geq s({A})$. 
\end{claim}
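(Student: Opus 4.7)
Writing $A' = A \cup U^-$ (so $\overline{A'} = \overline{A} \setminus U^-$), the two inequalities to be proved become $s(A') \geq s(A)$ and $s(\overline{A'}) \geq s(\overline{A})$. My plan is to handle each separately, using essentially monotonicity-type arguments, and then to exploit the fact that $U$ is an equivalence class (not just an arbitrary subset) for the harder direction.

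For the first inequality $s(A') \geq s(A)$, I would invoke a straightforward monotonicity principle: enlarging the distinguishing set from $A$ to $A' \supseteq A$ can only refine the induced equivalence relation. Hence on the common ground set $\overline{A'}$, the partition induced by $\sim_{A'}$ is a refinement of the partition induced by $\sim_A$. Since the chosen representative $u$ remains in $\overline{A'}$, no $\sim_A$-class on $\overline{A}$ has been emptied (the class $U$ is simply reduced to the singleton $\{u\}$, while all other classes are untouched). Thus $\sim_A$ still has $s(A)$ classes on $\overline{A'}$, and the refinement $\sim_{A'}$ gives at least as many.

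The second inequality $s(\overline{A'}) \geq s(\overline{A})$ is more delicate, because shrinking the distinguishing set from $\overline{A}$ to $\overline{A'}$ might \emph{a priori} merge some classes. The key observation, which I would put at the heart of the argument, is that since $U$ is a class of $\sim_A$ on $\overline{A}$, every vertex in $U$ has the same neighborhood in $A$. Consequently, for any pair $x, y \in A$ and any $v \in U^-$, the vertex $v$ distinguishes $x$ from $y$ if and only if $u$ does. So no distinguishing power is lost between $A$-vertices when we remove $U^-$ from the distinguishing set: the partition of $A$ induced by $\sim_{\overline{A'}}$ coincides exactly with the partition of $A$ induced by $\sim_{\overline{A}}$, and hence has $s(\overline{A})$ classes. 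Enlarging the ground set by the vertices of $U^-$ can only create additional classes, yielding $s(\overline{A'}) \geq s(\overline{A})$.

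The main obstacle is clearly the second inequality, where shrinking the distinguishing set sounds dangerous; the crux is that because $U$ is a $\sim_A$-equivalence class (and not an arbitrary set), the retained representative $u$ absorbs all the distinguishing power of $U^-$ with respect to vertices of $A$, so in effect nothing is lost.
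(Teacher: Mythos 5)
Your proof is correct and rests on the same key observation as the paper's: since $u$ has the same neighbourhood in $A$ as every vertex of $U^-$, any distinguisher lost by removing $U^-$ can be replaced by $u$, while distinguishers for the other inequality are only gained. The paper phrases this via sets of representatives remaining pairwise distinguished rather than via refinement/coarsening of partitions, but the argument is essentially identical.
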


\begin{proof}
Let $R_A$ and $R_{\overline{A}}$ be sets of representatives for $\sim_{\overline{A}}$ on $A$ and $\sim_A$ on $\overline{A}$, respectively, where the representative of $U$ in $R_{\overline{A}}$ is $u$.
Let $v$ and $v'$ be two vertices of $R_B$, where $B\in\{A,\overline{A}\}$.
Then there is a vertex $w\in\overline{B}$ that distinguishes $v$ from $v'$, since $v$ and $v'$ do not belong to the same $\sim_{\overline{B}}$-equivalence class.
If $B=A$ and $w\sim_{\overline{A}}u$, then $u\in \overline{A}\setminus U^-$ also distinguishes $v$ from $v'$.
Otherwise, $w$ distinguishes $v$ from $v'$ with $w\in A\subseteq A\cup U^-$ if $B=\overline{A}$, and $w\in\overline{A}\setminus U^-$ if $B=A$.
Therefore, the vertices of $R_A$ belong to different classes for $\sim_{\overline{A}\setminus U^-}$ on $A\cup U^-$, and the vertices of $R_{\overline{A}}$ belong to different classes for $\sim_{A\cup U^-}$ on $\overline{A}\setminus U^-$.
Hence the result.
\end{proof}

\begin{claim}\label{claim: there exist good sets}
Let $A\subseteq V$ be such that $s(A)+s({\overline A})=S$.
Let $R_{\overline{A}}$ be a set of representatives of $\sim_A$ on $\overline{A}$.
Then ${R_{\overline{A}}}$ is a good set.
\end{claim}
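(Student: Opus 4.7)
The plan is to verify the two defining conditions of a good set for $B := R_{\overline{A}}$, namely that $s(\overline{B}) = |B|$ and that $s(B) + s(\overline{B}) = S$. Since $S$ is the maximum of $s(A') + s(\overline{A'})$ over subsets $A' \subseteq V$, the second condition reduces to the inequality $s(B) + s(\overline{B}) \geq S$.

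For the first condition, I would note that $\overline{B} = A \cup (\overline{A} \setminus R_{\overline{A}})$ contains $A$. Two distinct elements $u, u' \in B = R_{\overline{A}}$ represent different $\sim_A$-equivalence classes on $\overline{A}$ by construction, so $N(u) \cap A \neq N(u') \cap A$; since $A \subseteq \overline{B}$ this distinction survives when lifting the restricted neighbourhoods to $\overline{B}$, giving $u \not\sim_{\overline{B}} u'$. Thus every $\sim_{\overline{B}}$-class on $B$ is a singleton, so $s(\overline{B}) = |B| = s(A)$.

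For the inequality it then suffices to show $s(B) \geq s(\overline{A})$. I would pick a set of representatives $R_A$ for $\sim_{\overline{A}}$ on $A$, so $|R_A| = s(\overline{A})$, and argue that distinct $v, v' \in R_A$ lie in distinct $\sim_B$-classes on $\overline{B}$. By choice of $R_A$ some $w \in \overline{A}$ distinguishes $v$ from $v'$; if $w \in B$ there is nothing more to prove, and otherwise $w$ is $\sim_A$-equivalent to its representative $u \in R_{\overline{A}} = B$, so $u$ agrees with $w$ on the pair $\{v, v'\} \subseteq A$ and also distinguishes them. This yields $s(B) \geq s(\overline{A})$, which combined with $s(\overline{B}) = s(A)$ gives $s(B) + s(\overline{B}) \geq S$, and maximality of $S$ forces equality.

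The whole argument hinges on the same representative-swap observation underlying Claim~\ref{claim: potato thinning}: a representative $u \in R_{\overline{A}}$ and any vertex $w$ in its $\sim_A$-class have identical neighbourhoods on all of $A$, and so distinguish exactly the same pairs of vertices inside $A$. I do not foresee any genuine obstacle; the only care required is bookkeeping of which side of the partition each vertex sits in, and in particular that the distinguishing vertex $w$ can always be relocated into $B$ via its representative.
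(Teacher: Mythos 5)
Your proof is correct and rests on the same representative-swap observation as the paper's: the only difference is that the paper derives the inequalities $s(R_{\overline{A}})\geq s(\overline{A})$ and $s(\overline{R_{\overline{A}}})\geq s(A)$ by iterating Claim~\ref{claim: potato thinning} (moving each class's non-representatives across one class at a time), whereas you verify them directly in one step. Both arguments then conclude identically via the maximality of $S$ together with the observation that $s(\overline{R_{\overline{A}}})=\vert R_{\overline{A}}\vert$.
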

\begin{proof}
Observe that $\overline{R_{\overline{A}}}$ can be obtained from $A$ by iteratively adding for, each class $U$ of $\sim_{A}$ on ${\overline{A}}$, all but the representative vertex $u\in R_{\overline{A}}$ of $U$ to $A$.
Then, by Claim~\ref{claim: potato thinning}, we have
$s(\overline{R_{\overline{A}}})\geq s(A)$ and  $s(R_{\overline{A}})\geq s({\overline{A}})$.
By maximality of $S$, we thus have $s(\overline{R_{\overline{A}}})= s(A)$ and  $s(R_{\overline{A}})= s({\overline{A}})$.
The claim hence follows given that $s(R_{\overline{A}})+s(\overline{R_{\overline{A}}})=S$ and that $\vert R_{\overline{A}}\vert =s(A)=s(R_{\overline{A}}).$
\end{proof}

For the remainder of the proof, we fix a good set $A$.
Let $B$ be the set of all vertices that belong to a non-trivial class of $\sim_A$ on $\overline{A}$ and let $R_B$ be a set of representatives for these non-trivial classes. We denote by $k=k(A)=\vert R_B\vert$ the number of non-trivial classes in the $A$-partition of $\overline{A}$,
and assume without loss of generality that our choice of $A$ maximises the value of $k$  over all choices of a good set $A$. 
 We then let $C=\overline{A}\setminus B$ be the collection of all vertices belonging to trivial classes of $\overline{A}$.
 See Figure~\ref{fig:good} for an illustration.

 
 By Claim~\ref{claim: there exist good sets}, it is immediate that  $R_B\cup C=R_{\overline{A}}$, like $A$, is a good set. 
 In other words, $\overline{R_B\cup C}=A\cup \left(B\setminus R_B\right)$ and $\overline{A}=B\cup C$ are both locating sets. Further, let $A'$ be the vertices of $A$ that belong to a non-trivial class of $\sim_{R_B\cup C}$ on $A \cup (B \setminus R_B)$. Note that elements of $A$ distinguished by $B\cup C$ are also distinguished from each other by $R_B\cup C$. The size of $A'$ is thus at most the number of non-trivial equivalence classes in the $(R_B\cup C)$-partition of $\overline{R_B\cup C}=A\cup \left(B\setminus R_B\right)$. Since $R_B\cup C$ is a good set, it follows from the maximality of $k$ that $\vert A'\vert \leq k$.


 \begin{figure}
     \centering
     \includegraphics{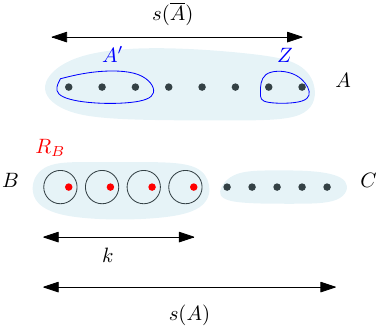}
     \caption{A good set $A$ along with the sets defined along the proof of the theorem.}
     \label{fig:good}
 \end{figure}
 
 We shall exhibit two additional locating sets.

\begin{claim}\label{claim: Acup C locating}
$A\cup C$ is a locating set.
In particular, for each pair of vertices $b,b'\in B$ such that $b\sim_A b'$, $C$ distinguishes $b$ from $b'$.
\end{claim}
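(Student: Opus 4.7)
The plan is to establish the ``in particular'' clause directly, since the main statement then follows immediately: for distinct $b, b' \in B = \overline{A \cup C}$ with $b \not\sim_A b'$, some vertex of $A \subseteq A \cup C$ already distinguishes them, while if $b \sim_A b'$ we need a distinguishing vertex in $C$.

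So fix a pair $b, b' \in B$ lying in a common non-trivial $\sim_A$-class $U$, and suppose for contradiction that no vertex of $C$ distinguishes $b$ from $b'$. Then $N_G(b) \cap (A \cup C) = N_G(b') \cap (A \cup C)$, and since $G$ is twin-free there is some $w \in V(G) \setminus \{b, b'\}$ adjacent to exactly one of $b, b'$. By the above identity $w$ must lie in $B \setminus \{b, b'\}$. Without loss of generality, $w \sim b$ and $w \not\sim b'$.

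The idea is to contradict the maximality of $k$ by exhibiting a good set with strictly more non-trivial classes. Set $A^{\ast} := A \cup \{w\}$: the $A^{\ast}$-partition of $\overline{A^{\ast}}$ refines the $A$-partition via the adjacency-to-$w$ relation, and the class $U$ (or $U \setminus \{w\}$, if $w \in U$) splits into at least two parts because $w$ witnesses $b \not\sim_{A^{\ast}} b'$. The other classes can only refine. This yields $s(A^{\ast}) \geq s(A) + 1$, and the maximality of $S$ forces both $s(\overline{A^{\ast}}) \leq |A| - 1$ and $s(A^{\ast}) + s(\overline{A^{\ast}}) = S$. Applying Claim~\ref{claim: there exist good sets} to $A^{\ast}$ produces a good set $A^{\sharp} := R_{\overline{A^{\ast}}}$. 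A careful count of the non-trivial classes of $\sim_{A^{\sharp}}$ on $\overline{A^{\sharp}}$ then should give $k(A^{\sharp}) \geq k + 1$, contradicting the maximality of $k$.

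The crux of the plan will be this last counting step: one must track precisely how the non-trivial classes propagate from the $A^{\ast}$-partition of $\overline{A^{\ast}}$ through Claim~\ref{claim: there exist good sets} into the new good set $A^{\sharp}$. In particular, one has to ensure that the extra split of $U$ survives as a genuinely new non-trivial class, and that the $k-1$ other non-trivial classes of the original $A$-partition are not absorbed or collapsed. Here the fact that $A$ is itself a good set (so every $\sim_{\overline{A}}$-class on $A$ is trivial) and Claim~\ref{claim: potato thinning} should play a decisive role in controlling the bookkeeping.
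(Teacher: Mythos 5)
Your setup is exactly right and matches the paper's up to the point where you produce $w\in B\setminus\{b,b'\}$ and observe that $s(A^{\ast})\geq s(A)+1$ for $A^{\ast}=A\cup\{w\}$. The step after that is where the argument breaks: the maximality of $S$ does \emph{not} force $s(A^{\ast})+s(\overline{A^{\ast}})=S$ --- it only gives $s(A^{\ast})+s(\overline{A^{\ast}})\leq S$ --- and that inequality is precisely the thing you should be contradicting. Since $w$ lies in a non-trivial class of $\sim_A$ on $\overline{A}$, any pair of vertices of $A$ distinguished only by $w$ is also distinguished by another member of $w$'s class, so $s(\overline{A^{\ast}})=s(\overline{A}\setminus\{w\})\geq s(\overline{A})$; this is the content of Claim~\ref{claim: potato thinning}, and it directly clashes with the bound $s(\overline{A^{\ast}})\leq |A|-1=s(\overline{A})-1$ that you yourself derive. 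Combining it with $s(A^{\ast})\geq s(A)+1$ gives $s(A^{\ast})+s(\overline{A^{\ast}})\geq S+1$, contradicting the maximality of $S$, and the proof ends there. This is exactly the paper's argument; you assembled all of its ingredients and stopped one inequality short of the conclusion.

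Because the equality $s(A^{\ast})+s(\overline{A^{\ast}})=S$ is in fact false, the remainder of your plan cannot be executed: Claim~\ref{claim: there exist good sets} applies only to sets attaining the maximum $S$, so the good set $A^{\sharp}$ you propose to extract from $A^{\ast}$ does not exist, and the ``careful count'' purporting to show $k(A^{\sharp})\geq k+1$ --- which is stated as a task rather than carried out --- is moot. (The maximality of $k$ is used elsewhere in the paper, to bound $|A'|$, but it plays no role in this claim.) So the gap is twofold: an unjustified inference from maximality of $S$, and, in its place, an unproven and unnecessary detour through the maximality of $k$.
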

		\begin{proof} This was already shown in~\cite{garijo2014difference} and~\cite{foucaud2016locating}, but we include a short proof here for completeness.

  If we have vertices $b,b'\in B$ with $b\not\sim_A b'$, then clearly these two vertices will still be distinguished by the larger set $A\cup C$. On the other hand, consider a pair of distinct vertices $b,b'\in B$ such that $b\sim_A b'$. Since $G$ is twin-free, this implies that there exists some vertex $v\in (B\setminus\{b,b'\})\cup C$ distinguishing $b$ from $b'$.

		Suppose for a contradiction that this vertex $v\in B$, i.e.\ that it belongs to a non-trivial class of the $A$-partition of $B\cup C$. Then it is easily seen that the $(A\cup\{v\})$-partition of $(B\setminus \{v\})\cup C$ contains at least one more class than the $A$-partition of $B\cup C$.

  Indeed, $v$ distinguishes $b$ from $b'$, and so splits $b$ and $b'$'s class from the $A$-partition into two distinct classes. Further, $v$'s class in the $A$-partition had size at least $2$ (or at least $3$ if it belonged to the same class as $b$ and $b'$), whence the removal of $v$ from $B\cup C$ does not turn any class from the $A$-partition into the empty class.

  By Claim~\ref{claim: potato thinning}, this implies  that $s(A\cup \{v\})+s(\overline{A}\setminus \{v\})> s(A)+s(\overline{A})=S$, contradicting the maximality of $S$. Thus we must have that $v\in C$, and in particular that $A\cup\{v\}\subseteq A\cup C$ distinguishes $b$ from $b'$.

	Thus every pair of distinct vertices $b,b'\in B$ are distinguished by $A\cup C$, and $A\cup C$ is a locating set, as desired.
		\end{proof}
		We now construct a final locating set. We first prove that there is a small subset $Z \subseteq A$ such that the $Z$-partition of $B$ is equal to the $A$-partition of $B$.
		
\begin{claim}\label{claim: separate k classes with at most k-1 vertices}
There exists a subset $Z \subseteq A$ of at most $k-1$ vertices such that the $Z$-partition of $B$ has the same $k$ classes as the $A$-partition of $B$.
\end{claim}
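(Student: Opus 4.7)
The plan is to construct $Z$ greedily. The starting observation is that for any $Z \subseteq A$, two vertices of $B$ with identical neighbourhoods in $A$ automatically have identical neighbourhoods in $Z$, so the $Z$-partition of $B$ is always a coarsening of the $A$-partition of $B$. In particular it has at most $k$ classes, and has exactly $k$ classes if and only if it \emph{coincides} with the $A$-partition. It therefore suffices to find $Z \subseteq A$ with $|Z| \leq k-1$ such that the $Z$-partition of $B$ contains $k$ classes.

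I would build $Z$ iteratively. Set $Z_0 = \emptyset$, so that the $Z_0$-partition of $B$ trivially has $1$ class. Suppose $Z_i$ has been defined and the $Z_i$-partition of $B$ has strictly fewer than $k$ classes. By the coarsening observation, some $Z_i$-class of $B$ must contain at least two distinct $A$-classes $U_j$ and $U_{j'}$. Since $U_j \neq U_{j'}$ in the $A$-partition of $B$, there exists $a \in A$ distinguishing $U_j$ from $U_{j'}$. Setting $Z_{i+1} = Z_i \cup \{a\}$ splits the $Z_i$-class containing $U_j \cup U_{j'}$ into at least two nonempty pieces, so the number of classes of the $Z_{i+1}$-partition of $B$ is strictly greater than the number of classes of the $Z_i$-partition.

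Starting from $1$ class and gaining at least one class at each step, after at most $k-1$ iterations we reach a set $Z$ whose associated partition of $B$ has $k$ classes, hence equals the $A$-partition. This yields the required $Z \subseteq A$ with $|Z| \leq k-1$. There is no real obstacle in this step: it is just a greedy chain of refinements from the trivial partition to the $A$-partition, and the bound $k-1$ is simply the maximum length of such a chain on a set with $k$ blocks.
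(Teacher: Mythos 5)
Your proof is correct and follows essentially the same greedy refinement argument as the paper: starting from $Z=\emptyset$ and repeatedly adding a vertex of $A$ that separates two $A$-classes still merged in the current $Z$-partition, gaining at least one class per step. The observation that the $Z$-partition always coarsens the $A$-partition, and the bound of $k-1$ steps to go from one class to $k$, match the paper's reasoning exactly.
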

		\begin{proof}
		We construct the set $Z$ greedily. Initially $Z=\emptyset$, and there are $k_Z=1$ classes in the $Z$-partition of $B$.
  Note that for any $Z\subseteq A$, each class in the $Z$-partition of $B$ is the union of classe in the $A$-partition of $B$.

		Suppose $k_Z<k$. Then there are two distinct parts $U$ and $U'$ in the $A$-partition of $B$ that lie inside the same part of the $Z$-partition of $B$. Since $A$ distinguishes $U$ from $U'$, this implies that  there exists $a\in A\setminus Z$ such that $a$ sends edges to all vertices in one of the parts $U$, $U'$, and no edges to the vertices in the other part. Adding $a$ to $Z$ increases the size of $Z$ by $1$ and the size of $k_Z$ by at least $1$.
		
		Repeating this procedure at most $k-1$ times, we construct the desired set $Z$ with $\vert Z\vert\leq k-1$, such that $k_Z=k$, and the $Z$-partition and $A$-partition of $B$ coincide.
		\end{proof}
  


		\begin{claim}\label{claim: union o A, B', C and Z locating}
			$W:=A' \cup Z\cup R_B \cup C$ is a locating set.
		\end{claim}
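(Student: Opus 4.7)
My plan is to verify directly that each pair of distinct vertices in
$\overline{W}$ is distinguished by some vertex of $W$.
Since $C\subseteq W$, the complement decomposes as $\overline{W}=(A\setminus (A'\cup Z))\cup (B\setminus R_B)$, so there are three types of pairs to handle: two in $A\setminus(A'\cup Z)$, two in $B\setminus R_B$, or one of each.

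The key preliminary observation I would record is a restatement of the definition of $A'$: a vertex $a\in A$ belongs to $A\setminus A'$ precisely when $\{a\}$ is a class of the $(R_B\cup C)$-partition of $A\cup(B\setminus R_B)$, i.e.\ $a\not\sim_{R_B\cup C}v$ for every other $v\in A\cup(B\setminus R_B)$. This is the only slightly delicate point; once it is on the table, each of the three cases is immediate from one of the earlier claims.

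For a pair $a,a'\in A\setminus (A'\cup Z)$, the preliminary observation applied to $a$ (or $a'$) gives $a\not\sim_{R_B\cup C}a'$, so $R_B\cup C\subseteq W$ distinguishes them. For a mixed pair $a\in A\setminus(A'\cup Z)$ and $b\in B\setminus R_B$, the same observation gives $a\not\sim_{R_B\cup C} b$, and again $R_B\cup C\subseteq W$ distinguishes them.

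Finally, for a pair $b,b'\in B\setminus R_B$, I split into subcases according to whether $b\sim_A b'$ or not. If $b\sim_A b'$, then Claim~\ref{claim: Acup C locating} directly provides a vertex of $C\subseteq W$ distinguishing $b$ from $b'$. Otherwise $b$ and $b'$ lie in distinct classes of the $A$-partition of $B$; since Claim~\ref{claim: separate k classes with at most k-1 vertices} asserts that the $Z$-partition of $B$ coincides with the $A$-partition of $B$, some vertex of $Z\subseteq W$ distinguishes them. The only real obstacle is bookkeeping the case analysis; no new graph-theoretic input beyond the preceding three claims is needed.
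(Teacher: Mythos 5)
Your proof is correct and follows essentially the same case analysis as the paper's: pairs within $B\setminus R_B$ are handled via Claims~\ref{claim: Acup C locating} and~\ref{claim: separate k classes with at most k-1 vertices}, and the remaining pairs via $R_B\cup C$. Your treatment of pairs in $A\setminus(A'\cup Z)$ is in fact slightly more direct than the paper's (which re-derives distinguishability from the locating property of $B\cup C$ rather than invoking the triviality of the $(R_B\cup C)$-classes of such vertices), but the substance is the same.
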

		\begin{proof}
			We must show that $W$ can distinguish all pairs of vertices from $\overline{W}=\left(A\setminus (A'\cup Z)\right)\cup (B\setminus R_B)$.
			
			Consider a pair of vertices $a,a'\in A\setminus (A'\cup Z)$. Since $B\cup C$ is a locating set for $A$, there exists a class $U$ in the $A$-partition of $B\cup C$ sending edges to exactly one of $a,a'$. Since $R_B\cup C$ contains one element from each of the $\sim_A$ equivalence classes, there exists $u \in U\cap (R_B\cup C)$ which distinguishes $a$ from $a'$.

			
			Consider now a pair of vertices $b,b'\in B\setminus R_B$. If they belong to the same non-trivial class in the $A$-partition of $B\cup C$, then, by Claim~\ref{claim: Acup C locating}, $C \subseteq W$ distinguishes $b$ from $b'$. On the other hand, if $b,b'$ belong to different non-trivial parts of the $A$-partition of $B\cup C$, then they must be distinguished by $Z\subseteq W$.
			
			 Finally, consider vertices $a\in A\setminus (A'\cup Z)$ and $b\in B\setminus R_B$. By construction of $A'$, all elements of $A\setminus A'$ lie in trivial parts of the $(R_B\cup C)$-partition of $A\cup \left(B\setminus R_B\right)$. In particular, $W$ contains the set $R_B\cup C$, which distinguishes $a$ from $b$.
		\end{proof}
We have thus identified four locating sets, giving us four different upper bounds on $L(G)$. Setting $b:= \vert B\vert$ and $c:=\vert C\vert$, we have:
\begin{align}
	L(G)& \leq \vert A\cup \left(B\setminus R_B\right)\vert = (n-b-c) +(b-k)= n-c-k \label{eq1}\\
	L(G)&\leq  \vert B\cup C\vert = b+c \label{eq2}\\
	L(G)&\leq  \vert A\cup C\vert = (n-b-c)+c=n-b \label{eq3}\\
	L(G)&\leq  \vert A'\cup R_B\cup C\cup Z\vert \leq k+ k + c+ k-1=c+3k-1 \label{eq4}.
\end{align}
Adding~\eqref{eq1} and~\eqref{eq4}, we see that 
\begin{align*}
2 L(G) \leq n +2k-1,
\end{align*}
which, if $k\le\frac{n+3}{8}$, implies that
\begin{align*}
L(G) \leq \frac{5n-1}{8},
\end{align*}
while adding~\eqref{eq1}, \eqref{eq2} and~\eqref{eq3} we obtain
\begin{align*}
	3 L(G) \leq 2n -k,
\end{align*}
which, if $k\ge\frac{n+3}{8}$, implies that
\begin{align*}
L(G) \leq \frac{5n-1}{8}.
\end{align*}
Hence there exists a locating set in $G$ of size at most $\lfloor \frac{5n-1}{8}\rfloor$. Adding at most one vertex to our locating set, we form a locating-dominating set of size at most $\lfloor\frac{5n+7}{8}\rfloor\le\lceil \frac{5n}{8}\rceil$. This concludes the proof of Theorem~\ref{theorem: main}.\qedsymbol
	
\section{Concluding remarks}
	Our proof of Theorem~\ref{theorem: main}, which was inspired by earlier arguments of Foucaud, Hening, L\"owenstein and Sasse~\cite{foucaud2016locating}, relies on maximising the sum of the `locating power' of a set $A$ and its complement $\overline{A}$. It is natural to ask about the value of this sum in general.
	\begin{question}\label{question: maximising locating powers over bipartitions}
		How small can $\max_{A\subseteq V(G)} s_A +s_{\overline{A}}$ be for a twin-free graph $G$ on  $n$ vertices?
	\end{question}
	\noindent As can be seen from our Question~\ref{question: partition locating sets} in the introduction, for all we know the answer to the question above could be $n$, and every twin-free graph could have a partition of its vertex sets into mutually locating sets. If true, this would be a very strong result, and bring us very close to a proof of Conjecture~\ref{conjecture: loc dom}.

	More generally, one could ask for the maximum locating power over $k$-partitions of a vertex set. Given a partition of the vertex set of a graph $G$ into $k$-set $\mathbf{A}=\{A_1, A_2, \ldots, A_k\}$, set $s_{\mathbf{A}}:=\sum_{i=1}^k s_{A_i}$. Further, let $s_k(G)$ denote the maximum of $s_{\mathbf{A}}$ over all $k$-partitions $V(G)=\sqcup_{i=1}^k A_i$ of $V(G)$, and let $s_k(n)$ denote the minimum of $s_k(G)$ over all $n$-vertex twin-free graphs $G$.
		\begin{question}\label{question: maximising locating powers over k-partitions}
What is the value of $s_k(n)$?
		\end{question}
	\noindent It is easily seen that $s_k(n)\leq (k-1) n$, and that $s_{n}(n)=2n-1$. It would be particularly interesting to determine the order of $\max_{2\leq k\leq n}s_k(n)$ --- is it linear or superlinear in $n$?
	
	\subsection*{Acknowledgements}
	The authors would like to express their gratitude to the organisers of the \emph{Journ\'ees Graphes et Algorithmes 2023} (JGA 23) in Lyon, and in particular for setting up this working group. In  addition, we would like to thank Florent Foucaud for an inspiring lecture at the JGA, as well as Marthe Bonamy~\cite{marthe} and Fr\'ed\'eric Havet~\cite{fred} for letting us know about upcoming work on Conjecture~\ref{conjecture: loc dom} from a different angle.

\end{document}